\theoremstyle{plain}
\numberwithin{equation}{section}
\newtheorem{theo}{Theorem}[section]
\newtheorem{lemm}[theo]{Lemma}
\newtheorem{coro}[theo]{Corollary}
\newtheorem{prob}[theo]{Problem}
\newtheorem{prop}[theo]{Proposition}
\theoremstyle{remark}
\newtheorem{rema}[theo]{Remark}
\def\C{\mathbb C}
\def\Z{\mathbb Z}
\def\R{\mathbb R}
\DeclareMathOperator{\Todd}{\mathrm{Todd}}
\DeclareMathOperator{\link}{link}
\DeclareMathOperator{\sta}{star}
\DeclareMathOperator{\pos}{pos}
\DeclareMathOperator{\Lie}{Lie}
\DeclareMathOperator{\e}{\mathbf e}
\def\Tn{T^n}
\def\T2{T^2}
\begin{document}
	\title{Todd genera of complex torus manifolds}
	\author[H.~Ishida]{Hiroaki Ishida}
	\address{Osaka City University Advanced Mathematical Institute, Sumiyoshi-ku, Osaka 558-8585, Japan.}
	\email{ishida@sci.osaka-cu.ac.jp}
	\author[M.~Masuda]{Mikiya Masuda}
	\address{Department of Mathematics, Osaka City University, Sumiyoshi-ku, Osaka 558-8585, Japan.}
	\email{masuda@sci.osaka-cu.ac.jp}

	\date{\today}
	\thanks{The first author was supported by JSPS Research Fellowships for Young Scientists}
	\thanks{The second author was partially supported by Grant-in-Aid for Scientific Research 22540094}
	\keywords{Torus manifold, complex manifold, toric manifold}
\subjclass[2010]{Primary 57R91, Secondary 32M05, 57S25}

	\begin{abstract}
In this paper, we prove that the Todd genus of a compact complex manifold $X$ of complex dimension $n$ with vanishing odd degree cohomology is one if the automorphism group of $X$ contains a compact $n$-dimensional torus $\Tn$ as a subgroup.  This implies that if a quasitoric manifold admits an invariant complex structure, then it is equivariantly homeomorphic to a compact smooth toric variety, which gives a negative answer to a problem posed by Buchstaber-Panov.   		
	\end{abstract}
	\maketitle

	\section{Introduction}
	A \emph{torus manifold} is a connected closed oriented smooth manifold of even dimension, say $2n$, endowed with an effective action of an $n$-dimensional torus $\Tn$ having a fixed point. A typical example of a torus manifold is a \emph{compact smooth toric variety} which we call a \emph{toric manifold} in this paper. Every toric manifold is a complex manifold. However, a torus manifold does not necessarily admit a complex (even an almost complex) structure. For example, the $4$-dimensional sphere $S^4$ with a natural $\T2$-action is a torus manifold but admits no almost complex structure.

	On the other hand, there are infinitely many nontoric torus manifolds of dimension $2n$ which admit $\Tn$-invariant almost complex structures when $n\ge 2$.  For instance,  for any positive integer $k$, there exists a torus manifold of dimension $4$ with an invariant almost complex structure whose Todd genus is equal to $k$ (\cite[Theorem 5.1]{Mas99}) while the Todd genus of a toric manifold is always one.  One can produce higher dimensional examples by taking products of those $4$-dimensional examples with toric manifolds.  The cohomology rings of the torus manifolds in these examples are generated by its degree-two part like toric manifolds.    

	In this paper, we consider a torus manifold with a $\Tn$-invariant (genuine) complex structure. We will call such a torus manifold a \emph{complex torus manifold}.  The following is our main theorem.    
	\begin{theo} \label{theo:maintheo}
		If a complex torus manifold has vanishing odd degree cohomology, then its Todd genus is equal to one.
	\end{theo}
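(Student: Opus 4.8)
My plan is to combine the Atiyah--Bott holomorphic Lefschetz fixed point formula with a Bialynicki--Birula-type cell decomposition coming from the complexified torus action. First, write $n=\dim_{\C}X$. Since $X$ is compact, $\mathrm{Aut}_0(X)$ is a complex Lie group (Bochner--Montgomery), so the $\Tn$-action extends to a holomorphic faithful action of the complexification $(\C^\ast)^n$ (faithfulness is checked near a fixed point, where the action is linearizable and standard). Because this action is holomorphic and effective, no positive-dimensional subtorus acts trivially on a nonempty open subset --- otherwise it would act trivially on all of $X$ by real-analyticity --- and applying this to the normal bundle of a fixed submanifold shows that every connected component of $X^{\Tn}$ is a single point (the finiteness of $X^{\Tn}$ also follows from the hypothesis $H^{\mathrm{odd}}(X)=0$); in particular the isotropy weights $\chi_{1,p},\dots,\chi_{n,p}\in\Hom(\Tn,S^1)$ on $T_pX$ are nonzero for each $p\in X^{\Tn}$.

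Next, I would fix a generic one-parameter subgroup $\lambda\colon\C^\ast\to(\C^\ast)^n$, so that for $t$ in a dense subset of $\C^\ast$ the map $\lambda(t)$ has exactly the simple fixed points $X^{\Tn}$, and set $a_j(p)=\langle\chi_{j,p},\lambda\rangle\in\Z\setminus\{0\}$. The holomorphic Lefschetz fixed point formula then gives, for such $t$,
\[
\sum_{q}(-1)^q\operatorname{tr}\bigl(\lambda(t)^\ast\,\big|\,H^q(X,\mathcal O_X)\bigr)=\sum_{p\in X^{\Tn}}\prod_{j=1}^n\frac{1}{1-t^{a_j(p)}}.
\]
I would then argue that the common function $F$ is constant: its left-hand side is a Laurent polynomial in $t$ (the trace of a holomorphic, hence algebraic, representation of $\C^\ast$), so $F$ has no poles on $\C^\ast$, while each summand on the right is regular at $t=0$ and at $t=\infty$ because the $a_j(p)$ are nonzero, so $F$ is a pole-free rational function on $\mathbb P^1$ and hence a constant. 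Evaluating at $t=1$, where $\lambda(1)$ acts as the identity on cohomology, gives $F\equiv\Todd(X)$; letting $t\to0$ on the right-hand side gives $F=\#\{p\in X^{\Tn}:a_j(p)>0\ \text{for all }j\}$, since the $p$-th summand tends to $1$ if every $a_j(p)>0$ and to $0$ otherwise. Thus $\Todd(X)$ equals the number of \emph{sources} of the $\lambda$-flow.

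To see that this number is $1$, I would use that $X$ is genuinely complex. For the holomorphic $\C^\ast$-action $\lambda$ on the compact $X$ every orbit closure is a point or a rational curve, so $f(x):=\lim_{t\to0}\lambda(t)x$ exists for every $x$ --- apply Riemann's removable singularity theorem to the normalization of the orbit closure --- and $X=\bigsqcup_{p}f^{-1}(p)$. Linearizing the torus action near $p$ identifies $f^{-1}(p)$ near $p$ with the sum of the positive weight spaces of $\lambda$, so each cell $X^+_p:=f^{-1}(p)$ is a locally closed complex submanifold of dimension $d_p:=\#\{j:a_j(p)>0\}$; the cells with $d_p=n$ are therefore open in $X$. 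Since a connected complex manifold is irreducible, any such open cell is dense, and two distinct dense open subsets cannot be disjoint, so there is exactly one $p$ with all $a_j(p)>0$. Hence $\Todd(X)=1$.

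The step I expect to be the main obstacle is this last one outside the Kähler or projective setting: one has to be sure that the $\C^\ast$-limits $\lim_{t\to0}\lambda(t)x$ genuinely exist and that the resulting partition $X=\bigsqcup_pX^+_p$ enjoys the usual properties of a Bialynicki--Birula decomposition for an arbitrary compact complex manifold. This is precisely where an invariant complex structure is indispensable rather than merely an invariant almost complex structure, for which no holomorphic $\C^\ast$-action is available and the Todd genus can be any positive integer.
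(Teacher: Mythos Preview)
Your approach is genuinely different from the paper's, and in fact coincides with the strategy of Ishida--Karshon \cite{Ih-Ka12} alluded to at the end of the introduction. The paper instead proceeds by induction on $n$: the base case $n=2$ is handled using Orlik--Raymond's classification of simply connected $4$-manifolds with $T^2$-action together with Kodaira's deformation theorem and Donaldson's indecomposability theorem; for the inductive step the authors encode the fixed-point data in a multi-fan, build a map $f_X\colon|\Sigma_X|\to S^{n-1}$, and show (using the inductive hypothesis on characteristic submanifolds and the hypothesis $H^{\mathrm{odd}}(X)=0$ via \cite{Ma-Pa06}) that $f_X$ is a covering of a simply connected sphere, hence a homeomorphism, which forces $\Todd(X)=1$. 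Your route avoids the deep $4$-manifold input and would in fact prove the stronger statement that \emph{every} complex torus manifold has Todd genus one, without the cohomological hypothesis; this is exactly what \cite{Ih-Ka12} achieves. Note also that your fixed-point computation of $\Todd(X)$ as the number of ``sources'' is essentially Theorem~\ref{theo:todd} of the present paper, so the new content of your argument is entirely in the last paragraph.

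That last paragraph, however, contains a real gap, which you yourself flag. The assertion that ``every orbit closure is a point or a rational curve'' and that $\lim_{t\to0}\lambda(t)x$ exists for every $x$ is \emph{false} for holomorphic $\C^\ast$-actions on general compact complex manifolds: on the Hopf surface $(\C^2\setminus\{0\})/\langle 2I\rangle$ with the action $t\cdot[z,w]=[tz,t^2w]$, the orbit of $[1,1]$ has non-analytic closure containing an entire elliptic curve, and the limit as $t\to0$ does not exist. Of course this example has no torus fixed point, so it is not a torus manifold, but that only shows your hypotheses must enter somewhere --- and in your write-up they do not. Establishing the Bialynicki--Birula-type decomposition in this non-K\"ahler setting is precisely the substantial work carried out in \cite{Ih-Ka12}; appealing to Riemann's removable singularity theorem on the normalization presupposes that the orbit closure is an analytic curve, which is what needs proof. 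A smaller point: your sentence ``a connected complex manifold is irreducible, so any such open cell is dense'' is not correct as stated (an ordinary open ball is not dense); what you need is that the complement of a source cell has real codimension at least two, which does follow once the full decomposition $X=\bigsqcup_p X_p^+$ is in hand, since then the complement is a finite union of lower-dimensional submanifolds and removing it leaves $X$ connected.
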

	\begin{rema}\label{rema:Lefschetz}
		If a closed smooth manifold $M$ has vanishing odd degree cohomology, then any smooth $T^n$-action on $M$ has a fixed point (see \cite[Corollary 10.11 in p.164]{Bre72}).  
		In particular, a connected closed oriented smooth manifold $M$ of dimension $2n$ with an effective $T^n$-action is a torus manifold if $M$ has vanishing odd degree cohomology.  This implies that Theorem~\ref{theo:maintheo} is equivalent to the statement in the abstract.  
		
	\end{rema}
	Other important examples of torus manifolds are \footnote{Davis-Januszkiewicz \cite{Da-Ja91} uses the terminology \emph{toric manifold} but it was already used in algebraic geometry as the meaning of (compact) smooth toric variety, so Buchstaber-Panov \cite{Bu-Pa02} started using the word \emph{quasitoric manifold}.}\emph{quasitoric manifolds} introduced by M.~W.~Davis and T.~Januskiewicz (\cite{Da-Ja91}). A quasitoric manifold of dimension $2n$ is a closed smooth manifold with a locally standard $\Tn$-action, whose orbit space is an $n$-dimensional simple polytope.  It is unknown whether any toric manifold is a quasitoric manifold.  However, if a toric manifold is projective, then it is a quasitoric manifold because a projective toric manifold with the restricted compact torus action admits a moment map which identifies the orbit space with a simple polytope.  

A.~Kustarev (\cite[Theorem 1]{Kus09}) gives a criterion of when a quasitoric manifold admits an invariant almost complex structure.  It also follows from his criterion that there are many nontoric quasitoric manifolds which have invariant almost complex structures. 
However, it has been unknown whether there is a quasitoric manifold which admits an invariant complex structure, and V.~M.~Buchstaber and T.~E.~Panov posed the following problem  (\cite[Problem 5.23]{Bu-Pa02}), which motivated the study in this paper. 
	\begin{prob}[Buchstaber-Panov]\label{prob:BuPa}
		Find an example of nontoric quasitoric manifold that admits an invariant complex structure.
	\end{prob}

As a consequence of Theorem~\ref{theo:maintheo}, we obtain the following which gives a negative answer to Problem \ref{prob:BuPa}.
	\begin{theo}\label{theo:quasitoric}
		If a quasitoric manifold admits an invariant complex structure, then it is equivariantly homeomorphic to a toric manifold.
	\end{theo}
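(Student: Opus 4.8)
The plan is to feed Theorem~\ref{theo:maintheo} into the combinatorial description of quasitoric manifolds by multi-fans. Let $M$ be a quasitoric manifold of dimension $2n$ carrying an invariant complex structure $J$, with quotient simple polytope $P$ and characteristic function $\lambda$. Since quasitoric manifolds have cohomology concentrated in even degrees (\cite{Da-Ja91}, \cite{Bu-Pa02}), $M$ is a complex torus manifold with vanishing odd degree cohomology, so Theorem~\ref{theo:maintheo} yields $\Todd(M)=1$. This is the only step that uses integrability of $J$; everything after it is formal.

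Next I would read off a multi-fan from $J$. Because the $\Tn$-action preserves $J$ it acts by biholomorphisms, so each characteristic submanifold $M_F$, being a component of the fixed-point set of a circle subgroup, is a complex submanifold; orienting $M$ and every $M_F$ by their complex structures gives an omniorientation of $M$ whose local sign at each fixed point is $+1$, because the complex orientations of the normal lines to the $M_F$ multiply to the complex orientation of the tangent space. The associated multi-fan $\Delta$ assigns to a vertex $v=\bigcap_{i=1}^{n}F_i$ of $P$ the simplicial $n$-cone dual to the cone spanned by the tangential weights $w_1(v),\dots,w_n(v)$ of the $\Tn$-representation on $T_vM$, which by the definition of $\lambda$ is $\pos(\epsilon_{F_1}\lambda_{F_1},\dots,\epsilon_{F_n}\lambda_{F_n})$ for signs $\epsilon_F\in\{\pm1\}$ determined by $J$, with the $\epsilon_{F_i}\lambda_{F_i}$ through each vertex a $\Z$-basis. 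Then $\Delta$ is a complete simplicial multi-fan, and since the Todd genus depends only on the stable tangent bundle and the omniorientation is the one induced by $J$, the combinatorial Todd genus of $\Delta$ equals $\Todd(M)=1$.

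I would then deduce that $\Delta$ is a genuine fan. For a generic $\xi\in\R^n$ the Todd genus of $\Delta$ localizes to $\#\{v:\langle w_i(v),\xi\rangle>0\ \text{for all }i\}$, and this is exactly the number of vertices $v$ whose cone in $\Delta$ contains $\xi$ in its interior; hence for every generic $\xi$ exactly one maximal cone of $\Delta$ contains $\xi$ in its interior. As all local signs are $+1$, the maximal cones of $\Delta$ therefore have pairwise disjoint interiors and cover $\R^n$, so $\Delta$ is a complete nonsingular fan $\Sigma$ whose face poset is dual to that of $P$ and whose primitive ray generators are the $\epsilon_F\lambda_F$. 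The toric manifold $X_\Sigma$ is then a quasitoric manifold with quotient $P$ and characteristic function $\lambda$ (changing the signs of the $\lambda_F$ does not change the manifold), so by the uniqueness of the quasitoric manifold over $(P,\lambda)$ up to equivariant homeomorphism (\cite{Da-Ja91}) $M$ is equivariantly homeomorphic to $X_\Sigma$, which is a toric manifold.

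I expect the main obstacle to be the third paragraph: making precise the localization formula for $\Todd(\Delta)$ in terms of the cones of $\Delta$, and then the step ``$\Todd(\Delta)=1$, all local signs $+1$, $\Delta$ complete $\Rightarrow$ $\Delta$ is a fan'', which rests on the structure theory of multi-fans. Verifying that the orbit space of $X_\Sigma$ is $P$ and invoking uniqueness of quasitoric manifolds over fixed combinatorial data are routine but each merit a careful sentence; and of course the genuine analytic content is hidden in Theorem~\ref{theo:maintheo}, which we are free to use.
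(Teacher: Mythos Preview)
Your proposal is correct and follows essentially the same route as the paper: apply Theorem~\ref{theo:maintheo} to get $\Todd=1$, use the localization formula for the Todd genus of a multi-fan (the paper's Theorem~\ref{theo:todd}) to conclude that the multi-fan is an ordinary complete nonsingular fan (the paper's Corollary~\ref{coro:fan}), and then invoke the Davis--Januszkiewicz reconstruction of a quasitoric manifold from $(P,\lambda)$ to identify $M$ with the associated toric manifold. Your anticipated ``obstacle'' in the third paragraph is precisely what Theorem~\ref{theo:todd} and Corollary~\ref{coro:fan} already supply, and your discussion of omniorientations and local signs $+1$ is just an unpacking of how the paper's choice of $\lambda_i$ via \eqref{eq:3.1} and the weight function $w_X\equiv 1$ arise from the invariant complex structure.
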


	This paper is organized as follows. In Section \ref{sec:dim2}, we study simply connected compact complex surfaces with torus actions.  In Section \ref{sec:multi-fan}, we review the notion of multi-fan and recall a result on Todd genus. In Section \ref{sec:map}, we define a map associated with the multi-fan of a complex torus manifold $X$ and give a criterion of when the Todd genus of $X$ is one in terms of the map. Theorems \ref{theo:maintheo} and \ref{theo:quasitoric} are proved in Sections \ref{sec:oddvanish}  and  \ref{sec:quasitoric} respectively.  Throughout this paper, all cohomology rings and homology groups are taken with   $\Z$-coefficients.
	
While preparing this paper, the first author and Yael Karshon proved that a complex torus manifold is equivariantly biholomorphic to a toric manifold (\cite{Ih-Ka12}).  Although Theorem~\ref{theo:maintheo} is contained in the result, the argument in this paper is completely different from that in \cite{Ih-Ka12} and we believe that this paper is worth publishing.

\section{Simply connected complex surfaces with torus actions}\label{sec:dim2}

We first recall two results on simply connected 4-manifolds.  

	\begin{theo}[\cite{Or-Ra70}]\label{theo:Or-Ra}
	If a simply connected closed smooth manifold of dimension $4$ admits an effective smooth action of $\T2$, then it is diffeomorphic to a connected sum of copies of  $\C P^2$, $\overline{\C P^2}$ ($\C P^2$ with reversed orientation) and $S^2\times S^2$. 
	\end{theo}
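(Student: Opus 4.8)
The plan is to read the topology of $M$ off the orbit space of the $\T2$-action and then reduce the classification to a combinatorial statement about lattice vectors. \emph{First,} a local slice analysis of smooth $\T2$-actions on $4$-manifolds shows that the orbit space $M^{\ast}:=M/\T2$ is a compact surface with boundary, in which interior points correspond to free orbits, boundary edges to one-dimensional orbits (a circle subgroup acting trivially), and boundary vertices to fixed points; there is at least one fixed point, e.g.\ by Remark~\ref{rema:Lefschetz}, since a simply connected $4$-manifold has vanishing odd degree cohomology. Feeding simple connectedness of $M$ into the exact sequence relating $\pi_1(M)$, $\pi_1(M^{\ast})$, and the isotropy subgroups forces $M^{\ast}$ to be simply connected with no exceptional orbits (finite nontrivial isotropy) in its interior; a simply connected compact surface with nonempty boundary is a disk, so $M^{\ast}\cong D^2$ and all singular orbits lie over $\partial D^2$.

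\emph{Next,} label the edges of $\partial D^2$ cyclically $e_1,\dots,e_k$ (with $k\ge 2$) and attach to $e_i$ the primitive vector $v_i\in\Z^2\cong\Hom(S^1,\T2)$ generating the isotropy circle over $e_i$. Near the fixed point separating $e_i$ and $e_{i+1}$ the action is, in suitable coordinates, the standard linear $\T2$-action on $\C^2$ with weights $v_i$ and $v_{i+1}$ up to sign, so smoothness forces $\{v_i,v_{i+1}\}$ to be a $\Z$-basis, i.e.\ $|\det(v_i,v_{i+1})|=1$. Conversely, from a disk with such a legal weighting one builds $M$ by a standard handle decomposition (a trivial $T^2\times D^2$ over the interior, a $D^2$-bundle over $S^1$ for each edge, a $4$-ball over each vertex), and this shows that $M$ is determined up to equivariant diffeomorphism by the cyclic tuple $(v_1,\dots,v_k)$ modulo the evident equivalences: the $GL(2,\Z)$-action reparametrizing $\T2$, cyclic rotation, reflection, and sign changes $v_i\mapsto -v_i$. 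The classification has now become purely combinatorial.

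\emph{Then,} one identifies the basic tuples together with an operation of equivariant connected sum: $k=2$ with $\{v_1,v_2\}$ a basis gives $S^4$ (the connected sum of no copies); $k=3$ gives $\C P^2$ or $\overline{\C P^2}$; $k=4$ gives the Hirzebruch surfaces, diffeomorphic to $S^2\times S^2$ or to $\C P^2\#\overline{\C P^2}$ according as the twisting number is even or odd. The key move is that, for $k\ge 3$, inserting $v_i+v_{i+1}$ (resp.\ $v_i-v_{i+1}$) as a new edge between $e_i$ and $e_{i+1}$ preserves legality and realizes an equivariant connected sum $M\,\#\,\overline{\C P^2}$ (resp.\ $M\,\#\,\C P^2$), since near that fixed point the operation is precisely the blow-up of $\C^2$ at the origin. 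Equivalently, if the tuple contains three consecutive weights with $v_{i-1}=\pm v_i\pm v_{i+1}$, the edge $e_i$ may be deleted, splitting off a $\C P^2$ or $\overline{\C P^2}$ summand and decreasing $k$.

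\emph{The main obstacle} is the combinatorial lemma that every legally weighted tuple with $k\ge 5$ contains such a deletable edge. Since the $v_i$ need not wind around the origin, one cannot simply quote the classification of complete smooth fans in $\R^2$; instead I would run a minimal-counterexample argument, writing $v_{i-1}=a_iv_i+\varepsilon_iv_{i+1}$ with $\varepsilon_i=\pm1$ at each $i$ and showing that if no edge is deletable (so $a_i\ne\pm1$ for all $i$) then the $v_i$ are forced to be the rays of a complete smooth fan with no removable ray, whence $k\le 4$ by the classification of minimal smooth toric surfaces --- a contradiction. Granting the lemma, induction on $k$ (with base cases $k\le 4$ treated above) exhibits $M$ as an iterated equivariant connected sum of copies of $S^4$, $\C P^2$, $\overline{\C P^2}$ and $S^2\times S^2$; the decomposition is automatically smooth because every gluing occurs inside a standard linear model around a fixed point.
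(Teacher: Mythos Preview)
The paper does not give a proof of this theorem; it is quoted from Orlik and Raymond \cite{Or-Ra70} and used as a black box in Section~\ref{sec:dim2}, so there is no argument in the paper to compare yours against.

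Your outline is the Orlik--Raymond strategy in spirit: reduce to the weighted disk $M^\ast\cong D^2$ with primitive isotropy vectors $v_1,\dots,v_k$ on the boundary edges, and then simplify the tuple combinatorially via equivariant connected-sum moves. The first two paragraphs are accurate (modulo routine care in the slice/handlebody reconstruction), and the identification of the base cases $k\le 4$ and the blow-up interpretation of edge insertion/deletion are correct.

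The genuine gap is exactly where you flag it. Your proposed proof of the combinatorial lemma asserts that if no $a_i=\pm 1$ (no deletable edge) then ``the $v_i$ are forced to be the rays of a complete smooth fan with no removable ray''. But the condition $|\det(v_i,v_{i+1})|=1$ for consecutive pairs does not force the $v_i$ to positively span $\R^2$; a priori they could all lie in a half-plane, and then no fan---let alone a complete one---is in sight, so the minimal-toric-surface classification cannot be invoked as stated. Orlik and Raymond handle the reduction by a direct arithmetic argument on the weight sequence (normalising two adjacent weights to the standard basis and controlling the rest), not by appealing to fans. If you want to keep your approach you must supply an honest argument for why a tuple with $k\ge 5$ and no deletable edge is impossible; as written, this step is a hope rather than a proof.
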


		\begin{theo}[\cite{Don90}]\label{theo:Donaldson}
			If a simply connected projective complex surface is decomposed into $Y_1\# Y_2$ as oriented smooth manifolds, then either $Y_1$ or $Y_2$ has a negative definite cup-product form. 
		\end{theo}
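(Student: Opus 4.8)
The plan is to obtain Theorem~\ref{theo:Donaldson} from two facts of Donaldson's gauge theory: the non-triviality of the polynomial invariants of a simply connected algebraic surface, and the vanishing of those invariants for a connected sum in which both summands have positive $b_2^+$. First I would record the elementary reductions. Since the surface $S$ in the statement is simply connected and $S\cong Y_1\#Y_2$, each $Y_i$ is a simply connected closed smooth $4$-manifold, because $\pi_1(Y_1\#Y_2)\cong\pi_1(Y_1)*\pi_1(Y_2)$; and $Q_S\cong Q_{Y_1}\oplus Q_{Y_2}$, so $b_2^+(S)=b_2^+(Y_1)+b_2^+(Y_2)$.

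Next I would split into cases. If $b_2^+(Y_i)=0$ for some $i$, then $Q_{Y_i}$ is negative semidefinite, hence negative definite (possibly of rank $0$) because it is unimodular by Poincar\'e duality, and we are done. Otherwise $b_2^+(Y_1)\ge 1$ and $b_2^+(Y_2)\ge 1$, so $b_2^+(S)\ge 2$. On the one hand, a simply connected projective surface with $b_2^+>1$ has, by Donaldson's theorem on algebraic surfaces, at least one non-zero polynomial invariant. On the other hand, when $b_2^+(Y_1)>0$ and $b_2^+(Y_2)>0$ every polynomial invariant of $Y_1\#Y_2$ vanishes: over a long neck an anti-self-dual connection degenerates to a pair of connections on the two summands, and a dimension count shows the resulting moduli spaces cannot carry the classes needed to evaluate the invariant. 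This contradiction eliminates the second case, which proves the theorem.

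A couple of points need care. The invariants are used only for $b_2^+(S)\ge 2$, which holds automatically in the only case where they are invoked, so the chamber dependence of the theory when $b_2^+=1$ never enters. And the genuinely hard input is that an algebraic surface has a non-vanishing invariant; this is proved by realizing the invariants through moduli of stable holomorphic bundles and showing those moduli spaces carry non-trivial fundamental classes, and none of the results recalled earlier in the paper bear on it, so I would cite \cite{Don90}. One could instead run the same dichotomy with Seiberg--Witten invariants, using that a K\"ahler surface with $b_2^+>1$ has a basic class and that Seiberg--Witten invariants vanish on such connected sums; that is the modern route but not the one in \cite{Don90}.

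I expect the main obstacle to be exactly this: there is no elementary way past the non-vanishing theorem from gauge theory, and the part one can do by hand -- the intersection-form bookkeeping and the reduction to $b_2^+(Y_1),b_2^+(Y_2)\ge 1$ -- is essentially all of the ``soft'' content of the argument.
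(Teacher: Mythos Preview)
The paper does not prove this statement at all: Theorem~\ref{theo:Donaldson} is quoted from \cite{Don90} as a black box and then applied in the proof of Proposition~\ref{prop:todd}. Your sketch is a correct outline of Donaldson's own argument---the reduction to $b_2^+(Y_1),b_2^+(Y_2)\ge 1$, the vanishing of the polynomial invariants on such connected sums, and the non-vanishing for algebraic surfaces---and you correctly flag that the last step is the deep input and must be cited from \cite{Don90}.
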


	Let $X$ be a simply connected compact complex surface whose automorphism group contains $\T2$ as a subgroup. 
	By Theorem \ref{theo:Or-Ra}	
\begin{equation} \label{eq:2.0}
		X \cong k\C P^2 \# \ell\overline{\C P^2} \# m(S^2\times S^2), \quad k,\ell , m \geq 0
	\end{equation}
	as oriented smooth manifolds. Therefore, the Euler characteristic $\chi (X)$ and the signature $\sigma (X)$ of $X$ are respectively given by 
	\begin{equation*}
		\chi (X)= k+\ell +2m+2 \text{\quad and \quad} \sigma(X) =k-\ell
	\end{equation*}
	and hence the Todd genus $\Todd(X)$ of $X$ is given by 
	\begin{equation} \label{eq:2.1}
		\Todd(X) =\frac{1}{4}(\chi (X) +\sigma (X))= \frac{1}{2}(k +m +1).
	\end{equation}
	
	The following proposition is a key step toward Theorem~\ref{theo:maintheo}.  
	
	\begin{prop}\label{prop:todd}
	Let $X$ be as above.  Then $\Todd(X)=1$. 
	\end{prop}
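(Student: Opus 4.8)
The plan is to feed the smooth classification \eqref{eq:2.0}, which already used the $T^2$-action, into Donaldson's theorem (Theorem~\ref{theo:Donaldson}); the only extra ingredient needed is that $X$ is projective, and granting that, the proposition falls out of a short comparison of intersection forms.

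First I would check that $X$ is projective. Since $X$ is simply connected, $b_1(X)=0$ is even, so $X$ is Kähler (a compact complex surface is Kähler if and only if its first Betti number is even). As $X$ is compact, $\operatorname{Aut}(X)$ is a complex Lie group with Lie algebra $H^0(X,T_X)$, and it contains the $2$-torus $T^2$, so it is positive-dimensional and $h^0(X,T_X)\ge1$. By the Enriques--Kodaira classification, a simply connected compact Kähler surface is a rational surface, a K3 surface, a simply connected properly elliptic surface, or a simply connected surface of general type; of these only rational surfaces carry a nonzero holomorphic vector field---a K3 surface $S$ has $T_S\cong\Omega^1_S$, hence $h^0(T_S)=h^0(\Omega^1_S)=\tfrac12 b_1(S)=0$, while properly elliptic and general-type surfaces have finite automorphism group. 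Hence $X$ is rational, in particular projective. As $X$ is Kähler the square of the Kähler class is positive, so $b_2^+(X)\ge1$; and reading off \eqref{eq:2.0} one has $b_2^+(X)=k+m$, so $k+m\ge1$.

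Next I would bound $k+m$ from above with Theorem~\ref{theo:Donaldson}. Suppose $k+m\ge2$. If $k\ge1$, rewrite \eqref{eq:2.0} as
\[
X\;\cong\;\C P^2\,\#\,\big((k-1)\C P^2\,\#\,\ell\,\overline{\C P^2}\,\#\,m(S^2\times S^2)\big);
\]
the first summand has positive definite cup-product form and the second has $b_2^+=(k+m)-1\ge1$, so neither summand is negative definite, contradicting Theorem~\ref{theo:Donaldson}. If $k=0$, then $m\ge2$, and rewriting \eqref{eq:2.0} as
\[
X\;\cong\;(S^2\times S^2)\,\#\,\big(\ell\,\overline{\C P^2}\,\#\,(m-1)(S^2\times S^2)\big),
\]
the first summand has $b_2^+=1$ and the second has $b_2^+=m-1\ge1$, again contradicting Theorem~\ref{theo:Donaldson}. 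Hence $k+m\le1$, so $k+m=1$, and \eqref{eq:2.1} gives $\Todd(X)=\tfrac12(k+m+1)=1$.

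The step I expect to be the real obstacle is the first one---deducing projectivity of $X$ from the hypothesis $T^2\subset\operatorname{Aut}(X)$---since it is the only place where structural facts about compact complex surfaces are used; everything after is routine bookkeeping with intersection forms. (Recording Theorem~\ref{theo:Donaldson} above suggests that the intended argument establishes only the weaker statement that $X$ is projective; a classification argument as sketched in the first step would instead give that $X$ is rational, whence $\Todd(X)=\chi(\mathcal O_X)=1$ directly.)
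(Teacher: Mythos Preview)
Your proof and the paper's share the same backbone: establish that $X$ (or an oriented diffeomorph) is projective, then feed the Orlik--Raymond decomposition into Donaldson's theorem; your case split on $k$ and $m$ matches the paper's almost verbatim. The difference lies in the first step. The paper simply cites Kodaira: since $b_1(X)=0$ is even, $X$ deforms to an algebraic surface, which is projective and diffeomorphic to $X$, so Donaldson applies to that deformation. Your route through the Enriques--Kodaira classification is longer but, as you observe at the end, reaches the stronger conclusion that $X$ is rational---from which $\Todd(X)=\chi(\mathcal O_X)=1$ follows directly, without Donaldson at all.

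One sentence in your classification step needs tightening: properly elliptic surfaces can have infinite automorphism group (e.g.\ Mordell--Weil translations on an elliptic fibration of positive rank), so ``finite automorphism group'' is not the correct invariant. What you actually need is $h^0(T_X)=0$ for simply connected K\"ahler surfaces with $\kappa\ge 1$. This does hold: for compact K\"ahler $X$ with $\kappa(X)\ge 0$ the identity component $\operatorname{Aut}^0(X)$ is a complex torus (Fujiki, Lieberman), and a positive-dimensional complex torus acting holomorphically and effectively has no fixed points, so a nonzero holomorphic vector field would be nowhere vanishing and force $\chi(X)=0$---impossible for a simply connected surface. With that fix your argument is complete. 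Your explicit check that $b_2^+=k+m\ge 1$ via the K\"ahler class also covers the case $(k,m)=(0,0)$, which the paper's three displayed cases do not list.
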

	\begin{proof}
		Since $X$ is simply connected, the first betti number of $X$ is $0$, in particular, even. Thus, $X$ is a deformation of an algebraic surface (\cite[Theorem 25]{Kod64}). 
		Since any algebraic surface is projective (see \cite[Chapter IV, Corollary 5.6]{BPV84}), we can apply Theorem \ref{theo:Donaldson} to our $X$.  

Unless $(k,m)=(1,0)$ or $(0,1)$, it follows from \eqref{eq:2.0} that $X$ can be decomposed into $Y_1 \# Y_2$ as oriented smooth manifolds, where 
		\begin{alignat*}{2}
			Y_1 &= \C P^2, \quad Y_2 = (k-1)\C P^2 \# \ell \overline{\C P^2} \# m(S^2\times S^2) \quad&\text{if $k\ge 2$},\\
			Y_1 &= S^2\times S^2,\quad Y_2=k\C P^2 \# \ell \overline{\C P^2} \# (m-1)(S^2\times S^2) \quad&\text{if $m\ge 2$},\\
			Y_1 &= \C P^2 \# \ell \overline{\C P^2},\quad Y_2 =S^2 \times S^2 \quad&\text{if $(k,m)=(1,1)$}.
		\end{alignat*}
In any case, neither of $Y_1$ and $Y_2$ has a negative cup-product form and this contradicts Theorem \ref{theo:Donaldson}.	Therefore, $(k,m)=(1,0)$ or $(0,1)$ and hence $\Todd(X)=1$ by \eqref{eq:2.1}.  
	\end{proof}

\section{Torus manifolds and multi-fans}\label{sec:multi-fan}
	In this section, we review the notion of multi-fans introduced in \cite{Ha-Ma03} and \cite{Mas99} and recall a result on Todd genus.  

A \emph{torus manifold} $X$ of dimension $2n$ is a connected closed oriented manifold endowed with an effective action of $\Tn$ having a fixed point. In this paper, we are concerned with the case when $X$ has a complex structure invariant under the action.  We will call such a torus manifold a \emph{complex torus manifold}.
	
	Throughout this section, $X$ will denote a complex torus manifold of complex dimension $n$ unless otherwise stated.  We define a combinatorial object $\Delta _X:= (\Sigma _X, C_X, w_X)$ called the \emph{multi-fan} of $X$. A \emph{characteristic submanifold} of $X$ is a connected complex codimension $1$ holomorphic submanifold of $X$ fixed pointwise by a circle subgroup of $\Tn$. Characteristic submanifolds are $\Tn$-invariant and intersect transversally.  Since $X$ is compact, there are only finitely many characteristic submanifolds, denoted $X_1,\dots ,X_m$. We set  
	\begin{equation*}
		\Sigma _X := \left\{ I \in \{1,2,\dots,m\} \mid X_I:=\bigcap _{i \in I}X_i \neq \emptyset \right\},
	\end{equation*}
which is an abstract simplicial complex of dimension $n-1$. 
	
	Let $S^1$ be the unit circle group of complex numbers and $T_i$ the circle subgroup of $\Tn$ which fixes $X_i$ pointwise. We take the isomorphism $\lambda _i : S^1 \to T_i \subset \Tn$ such that 
	\begin{equation} \label{eq:3.1}
		\lambda_i(g)_*(\xi )=g\xi \quad\text{for $^\forall g \in S^1 \text{ and } ^\forall \xi \in TX|_{X_i}/TX_i$}
	\end{equation} 
	where $\lambda _i(g)_*$ denotes the differential of $\lambda _i(g)$ and the right hand side of \eqref{eq:3.1} above is the scalar multiplication with the complex number $g$ on the normal bundle $TX|_{X_i}/TX_i$ of $X_i$.   We regard $\lambda_i$ as an element of the Lie algebra $\Lie\Tn$ of $\Tn$ through the differential and assign a cone 
	\begin{equation} \label{eq:3.2}
		C_X(I) := \pos (\lambda _i \mid i \in I) \subset \Lie\Tn
	\end{equation}
	to each simplex $I \in \Sigma _X$, where $\pos (A)$ denotes the positive hull spanned by elements in the set $A$.  This defines a map $C_X$ from $\Sigma _X$ to the set of cones in $\Lie\Tn$.
	
  We denote the set of $(n-1)$-dimensional simplices in $\Sigma _X$ by $\Sigma _X^{(n)}$.  For $I\in \Sigma^{(n)}$, $X_I$ is a subset of the $\Tn$-fixed point set of $X$. The \emph{weight function} $w_X\colon \Sigma _X^{(n)} \to \Z _{>0}$ is given by
	\begin{equation*}
		w_X(I):= \# X_I
	\end{equation*}
	where $\# A$ denotes the cardinality of the finite set $A$.

	The triple $\Delta _X := (\Sigma _X, C_X, w_X)$ is called the multi-fan of $X$. The Todd genus $\Todd(X)$ of $X$ can be read from the multi-fan $\Delta _X$ as follows. 
	\begin{theo}[\cite{Mas99}]\label{theo:todd}
Let $v$ be an arbitrary vector in $\Lie\Tn$ which is not contained in $C_X(J)$ for any $J\in \Sigma_X\backslash\Sigma_X^{(n)}$.  Then 
		\begin{equation*}
			\Todd(X)=\sum w_X(I)
		\end{equation*}
		where the summation runs over all $I \in \Sigma _X^{(n)}$ such that $C_X(I)$ contains $v$.  
	\end{theo}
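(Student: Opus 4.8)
The plan is to prove this by equivariant index theory: lift $\Todd(X)$ to an element of the representation ring $R(\Tn)$, localize it to the $\Tn$-fixed points via the holomorphic Lefschetz fixed point formula, recognize the fixed-point data as the multi-fan $\Delta_X$, and then read off the value at the identity by a short complex-analytic argument. Since $X$ is compact and the $\Tn$-action is holomorphic, each $H^p(X,\mathcal{O}_X)$ is a finite-dimensional $\Tn$-representation, so the equivariant Dolbeault index
\[
\Todd^{\Tn}(X):=\sum_p(-1)^p\bigl[H^p(X,\mathcal{O}_X)\bigr]
\]
is a well-defined element of $R(\Tn)$ whose value at the identity of $\Tn$ is $\chi(X,\mathcal{O}_X)=\Todd(X)$ by Hirzebruch--Riemann--Roch.

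Next, recall that the $\Tn$-fixed point set is the finite set $X^{\Tn}=\bigsqcup_{I\in\Sigma_X^{(n)}}X_I$. For $P\in X_I$ with $I=\{i_1,\dots,i_n\}$, the characteristic vectors $\lambda_{i_1},\dots,\lambda_{i_n}$ are linearly independent (the isotropy representation of $\Tn$ on $T_PX$ is faithful), hence a basis of $\Lie\Tn$; moreover $T_PX$ splits $\Tn$-equivariantly into the normal lines to the $X_{i_j}$ at $P$, the $j$-th of which, by \eqref{eq:3.1} and the transversality of the characteristic submanifolds, carries the weight $\mu^I_j\in(\Lie\Tn)^*$ determined by $\langle\mu^I_j,\lambda_{i_k}\rangle=\delta_{jk}$; thus $\{\mu^I_j\}_j$ is the basis dual to $\{\lambda_{i_j}\}_j$, and in particular depends only on $I$. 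Now reduce to a single circle: the right-hand side of the asserted formula is locally constant on the complement of the finitely many hyperplanes $\Span(\lambda_i\mid i\in I\setminus\{i_0\})$ (taken over $I\in\Sigma_X^{(n)}$ and $i_0\in I$), whereas $\Todd(X)$ does not depend on $v$ at all, so it suffices to verify the formula for $v$ in a dense set; we may therefore assume that $v$ is a primitive cocharacter avoiding these hyperplanes and spanning a circle subgroup $S^1_v\subset\Tn$ with $X^{S^1_v}=X^{\Tn}$. Writing $\phi(u)\in\Z[u,u^{-1}]$ for the restriction of $\Todd^{\Tn}(X)$ to $S^1_v$, so that $\phi(1)=\Todd(X)$, the Atiyah--Bott holomorphic Lefschetz fixed point formula applied to a generic element of $S^1_v$, after grouping the $w_X(I)=\#X_I$ fixed points in each $X_I$ (which contribute the same term), yields the identity of meromorphic functions on $\C^\times$
\[
\phi(u)=\sum_{I\in\Sigma_X^{(n)}}w_X(I)\prod_{j=1}^n\frac{1}{1-u^{-a^I_j}},\qquad a^I_j:=\langle\mu^I_j,v\rangle\in\Z\setminus\{0\}.
\]

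It remains to extract $\phi(1)$. For $0<|u|<1$ and for $|u|>1$ the right-hand side has no poles, and as $u\to 0$ each factor $(1-u^{-a^I_j})^{-1}$ tends to $0$ if $a^I_j>0$ and to $1$ if $a^I_j<0$, with the mirror behavior as $u\to\infty$. Hence $\lim_{u\to0}\phi(u)$ and $\lim_{u\to\infty}\phi(u)$ both exist, which forces the Laurent polynomial $\phi$ to contain no monomial of negative degree and no monomial of positive degree; so $\phi$ is constant, equal on the one hand to the sum of $w_X(I)$ over the $I$ with $a^I_j>0$ for all $j$, and on the other to the same sum with all $a^I_j<0$ (these therefore agree, which is the fact that the count is independent of the generic choice of $v$). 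Finally $v=\sum_j a^I_j\lambda_{i_j}$ by pairing with the $\mu^I_k$, and since $v$ avoids the hyperplanes above the condition ``$a^I_j>0$ for all $j$'' is precisely ``$v\in C_X(I)$''; therefore $\Todd(X)=\phi(1)=\sum_{v\in C_X(I)}w_X(I)$.

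The step I expect to demand the most care is the reduction to a circle: checking that a generic primitive cocharacter $v$ produces a circle $S^1_v$ whose fixed point set is exactly $X^{\Tn}$ and all of whose fixed points are nondegenerate, so that the holomorphic Lefschetz formula is an honest finite sum over $X^{\Tn}$, and justifying the passage to such $v$. By contrast, the identification of the weights $\mu^I_j$ with the basis dual to the characteristic vectors $\lambda_{i_j}$, which is the precise point at which the cones $C_X(I)$ enter, is a direct unwinding of \eqref{eq:3.1} and transversality; one needs only to keep the sign conventions consistent, and in any case a sign slip would merely replace $v$ by $-v$, which by the previous paragraph changes nothing.
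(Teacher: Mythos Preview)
The paper does not prove this theorem; it is quoted from \cite{Mas99} and immediately used to derive Corollary~\ref{coro:fan}. Your argument is the standard one and is essentially what appears in \cite{Mas99}: pass to the equivariant holomorphic Euler characteristic, localize to the isolated fixed points via Atiyah--Bott, and use the rigidity of the resulting Laurent polynomial (its limits at $0$ and $\infty$ exist, hence it is constant) to read off $\Todd(X)$ combinatorially. The identification of the tangent weights at a point of $X_I$ with the basis dual to $\{\lambda_i\}_{i\in I}$, and the consequence that ``all $a^I_j>0$'' is equivalent to $v\in C_X(I)$, are exactly right and are the link between the analytic formula and the multi-fan.

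Two small remarks. First, your genericity condition on $v$ (avoiding the hyperplanes $\Span(\lambda_i\mid i\in I\setminus\{i_0\})$ for $I\in\Sigma_X^{(n)}$) is equivalent to the paper's hypothesis that $v\notin C_X(J)$ for $J\in\Sigma_X\setminus\Sigma_X^{(n)}$, since every such $J$ is a face of some $I\in\Sigma_X^{(n)}$; it may be worth saying this explicitly. Second, the claim that all points of $X_I$ contribute the same local term is correct because the weight on the normal line to each $X_{i_j}$ is fixed globally by \eqref{eq:3.1}; you use this when you factor out $w_X(I)$, and it deserves one sentence of justification. Your self-identified ``delicate step'' (that a generic primitive cocharacter has $X^{S^1_v}=X^{\Tn}$ with nondegenerate fixed points) is routine here: there are finitely many nonzero weights occurring in the tangent spaces at the finitely many $\Tn$-fixed points, so one only needs $v$ to avoid finitely many rational hyperplanes.
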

	
The following corollary follows immediately from Theorem~\ref{theo:todd}.  

	\begin{coro}\label{coro:fan}
		$\Todd(X) =1$ if and only if the pair $(\Sigma _X,C_X)$ forms an ordinary complete nonsingular fan and $w_X(I)=1$ for every $I\in\Sigma^{(n)}_X$.
	\end{coro}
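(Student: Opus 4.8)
The plan is to read the two asserted conditions directly off the character formula in Theorem~\ref{theo:todd}, once one observes that its summands $w_X(I)$ are \emph{positive} integers and that ``nonsingular'' comes for free. So I would first record the following, valid for the multi-fan of any complex torus manifold $X$. Fix $I\in\Sigma_X^{(n)}$ and a point $p\in X_I$. Because $X$ is complex with holomorphic $\Tn$-action, $T_pX$ is a complex $\Tn$-representation $\bigoplus_{i\in I}\C_{\alpha_i}$ with weights $\alpha_i\in\Hom(\Tn,S^1)$; the $\Tn$-action on the connected manifold $X$ is effective, hence effective near $p$ and on $T_pX$, so the homomorphism $\Tn\to(S^1)^n$ given by the $\alpha_i$ ($i\in I$) is injective, hence an isomorphism of $n$-dimensional tori, so $\{\alpha_i\mid i\in I\}$ is a $\Z$-basis of $\Hom(\Tn,S^1)$. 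By \eqref{eq:3.1} and the fact that $T_i$ fixes $X_i$ pointwise, $\{\lambda_i\mid i\in I\}$ is the dual basis, hence a $\Z$-basis of the integral lattice of $\Lie\Tn$; thus each $C_X(I)$ is an $n$-dimensional nonsingular simplicial cone. Since moreover the $\lambda_i$ ($i\in I$) span $\Lie\Tn$, the circles $T_i$ generate $\Tn$, so $X_I$ --- a nonempty transverse intersection of complex codimension-one submanifolds --- is a nonempty finite subset of $X^{\Tn}$, and $w_X(I)=\#X_I\in\Z_{>0}$.

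For the implication $\Leftarrow$, I would take $v$ as in Theorem~\ref{theo:todd}. Completeness gives $v\in C_X(I)$ for some $I\in\Sigma_X^{(n)}$; the boundary of the simplicial cone $C_X(I)$ is the union of the $C_X(J)$ over the proper faces $J$ of $I$, which are non-top and hence avoided by $v$, so $v$ lies in the interior of $C_X(I)$; and the maximal cones of an ordinary fan have pairwise disjoint interiors, so $I$ is unique. Hence $\Todd(X)=w_X(I)=1$.

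For the implication $\Rightarrow$, I would assume $\Todd(X)=1$ and fix a generic $v$. Since $\sum_{v\in C_X(I)}w_X(I)=1$ with all summands positive integers, there is exactly one $I=I(v)\in\Sigma_X^{(n)}$ with $v\in C_X(I)$, and $w_X(I(v))=1$. Then I would let $v$ vary. (a) There is a generic $v$ in the interior of an arbitrary $C_X(J)$, $J\in\Sigma_X^{(n)}$, since that interior is a nonempty open set and the finitely many non-top cones $C_X(J')$ are lower-dimensional; such a $v$ forces $J=I(v)$, so $w_X(J)=1$, and thus $w_X\equiv 1$. (b) The closed set $\bigcup_{I\in\Sigma_X^{(n)}}C_X(I)$ contains every generic $v$, hence a dense subset of $\Lie\Tn$, hence all of $\Lie\Tn$; so the fan is complete. (c) Distinct $I_1,I_2\in\Sigma_X^{(n)}$ cannot have cones sharing an interior point, for a generic $v$ there would give $\sum_{v\in C_X(I)}w_X(I)\ge w_X(I_1)+w_X(I_2)\ge 2$; so the maximal cones have pairwise disjoint interiors. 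Combining (b), (c), the nonsingularity of the individual cones established above, and the simplicial complex $\Sigma_X$ itself (which dictates which faces of the maximal cones are to be identified), a routine verification --- carried out directly for simplicial cones, or by invoking the discussion of multi-fans in \cite{Ha-Ma03} --- shows that $(\Sigma_X,C_X)$ is an ordinary complete nonsingular fan.

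The one step that is more than bookkeeping is the automatic nonsingularity in the first paragraph: that is precisely where the effectiveness of the $\Tn$-action (not merely almost effectiveness) and the presence of the invariant complex structure are used, so that $T_pX$ is a genuine complex $\Tn$-representation and the weight homomorphism $\Tn\to(S^1)^n$ is an \emph{isomorphism} rather than merely having finite kernel. Granting that, the corollary is essentially immediate from Theorem~\ref{theo:todd}.
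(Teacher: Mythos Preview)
Your argument is correct and follows exactly the route the paper intends: the paper simply asserts that the corollary is immediate from Theorem~\ref{theo:todd}, and you have carefully unpacked that immediacy, including the observation that nonsingularity of each $C_X(I)$ is automatic from effectiveness of the $\Tn$-action at a fixed point. The one step you defer---that completeness together with pairwise disjoint interiors of the top cones forces $(\Sigma_X,C_X)$ to be an ordinary fan---is indeed routine for simplicial multi-fans (and is treated in \cite{Ha-Ma03} as you note), so there is no gap.
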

	
Suppose $X_J$ is connected for every $J \in \Sigma _X$.  Then $X_J$ is a complex codimension $\# J$ holomorphic submanifold of $X$ having a $\Tn$-fixed point. Moreover, the induced action of the quotient torus $\Tn/T_J$ on $X_J$ is effective and preserves the complex structure of $X_J$, where $T_J$ is the $\# J$-dimensional subtorus of $\Tn$ generated by $T_j$ for $j\in J$.  Therefore, $X_J$ is a complex torus manifold of complex dimension $n-\# J$ with the effective action of  the quotient torus $\Tn/T_J$. 
	
In this case, the multi-fan $\Delta _{X_J}=(\Sigma _{X_J},C_{X_J},w_{X_J})$ of $X_J$ for $J\in \Sigma$ can be obtained from the multi-fan $\Delta _X$ of $X$ as discussed in \cite{Ha-Ma03}, which we shall review.  We note that $X_J\cap X_i$ is non-empty if and only if $J\cup \{ i\}$ is a simplex in $\Sigma _X$ and each characteristic submanifold of $X_J$ can be written as the non-empty intersection $X_J\cap X_i$. Hence, the simplicial complex $\Sigma _{X_J}$ coincides with the link $\link (J;\Sigma _X)$ of $J$ in $\Sigma _X$ and 
	\begin{equation*}
		C_{X_J}(I)=\pos (\overline{\lambda _i}\mid i \in I)
	\end{equation*}
	for $I \in \link (J; \Sigma _X)$, where $\overline{\lambda _i}$ denotes the image of $\lambda _i$ by the quotient map $\Lie\Tn \to \Lie\Tn/T_J$. The weight function $w_{X_J}$ is the constant function $1$.

	\section{Maps associated with multi-fans}\label{sec:map}
Let $X$ be a complex torus manifold of complex dimension $n$ and $\Delta _X=(\Sigma _X, C_X,w_X)$ the multi-fan of $X$. Throughout this section, we assume that $X_J$ is connected for every $J\in \Sigma_X$. We will define a continuous map $f_X$ from the geometric realization $|\Sigma_X|$ of $\Sigma_X$ to the unit sphere $S^{n-1}$ of the vector space $\Lie\Tn$ in which the cones $C_X(I)$ for $I\in \Sigma_X$ sit, and give a criterion of when the Todd genus of $X$ is equal to $1$ in terms of the map $f_X$.   

	We set 
	\begin{equation*}
		\sigma _I := \left\{ \sum _{i \in I}a_i\e _i \mid \sum _{i\in I}a_i =1, a_i \geq 0 \right\} \subset \R ^m \quad\text{for $I \in \Sigma _X$}, 
	\end{equation*}
where $\e _i$ is the $i$-th vector in the standard basis of $\R ^m$. The geometric realization $|\Sigma_X|$ of $\Sigma _X$ is given by  
	\begin{equation*}
		|\Sigma _X| = \bigcup _{I \in \Sigma _X}\sigma _I.
	\end{equation*}	
Recall that the homomorphisms $\lambda _i\colon S^1\to \Tn$ for $i=1,\dots,m$ defined in Section~\ref{sec:multi-fan} are regarded as elements in $\Lie\Tn$ through the differential.  We take an inner product on $\Lie \Tn$ and denote the length of an element $v\in \Lie\Tn$ by $|v|$.  We define a map $f_X\colon |\Sigma _X|\to S^{n-1}$, where $S^{n-1}$ is the unit sphere of $\Lie\Tn$, by  
	\begin{equation} \label{eq:4.1}
		f_X|_{\sigma _I}\left( \sum_{i\in I}a_i\e _i\right) =\frac{\sum _{i\in I}a_i\lambda _i}{|\sum _{i\in I}a_i\lambda _i|}.
	\end{equation}
Clearly, $f_X$ is a closed continuous map. 
	\begin{lemm}\label{lemm:homeo}
		The map $f_X \colon |\Sigma _X| \to S^{n-1} $ is a homeomorphism if and only if $\Todd(X)=1$.
	\end{lemm}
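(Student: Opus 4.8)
The plan is to connect the topological statement about $f_X$ with the combinatorial criterion for $\Todd(X)=1$ furnished by Corollary~\ref{coro:fan}. Recall that $\Todd(X)=1$ if and only if $(\Sigma_X,C_X)$ is an ordinary complete nonsingular fan with all weights equal to one. So I would split the proof into the two implications and show in each that the homeomorphism property of $f_X$ is equivalent to these fan-theoretic conditions.

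For the ``if'' direction, assume $\Todd(X)=1$. Then $(\Sigma_X,C_X)$ is a complete nonsingular fan, which in particular means that the cones $C_X(I)$ for $I\in\Sigma_X^{(n)}$ have disjoint interiors and cover $\Lie\Tn$, and that within each top cone the generators $\lambda_i$ ($i\in I$) form a basis. On a single simplex $\sigma_I$, the formula \eqref{eq:4.1} is exactly the radial projection onto $S^{n-1}$ of the affine simplex spanned by the $\lambda_i$; because these $\lambda_i$ are linearly independent, $0$ is not in their convex hull, so this radial projection restricted to $\sigma_I$ is a homeomorphism onto the spherical simplex $C_X(I)\cap S^{n-1}$. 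Completeness of the fan says these spherical simplices cover $S^{n-1}$, and nonsingularity (disjoint interiors of the cones) guarantees they overlap only along lower-dimensional faces, so the maps glue to a continuous bijection $|\Sigma_X|\to S^{n-1}$; since $|\Sigma_X|$ is compact and $S^{n-1}$ Hausdorff, it is a homeomorphism.

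For the ``only if'' direction, assume $f_X$ is a homeomorphism. First, $|\Sigma_X|$ is then homeomorphic to $S^{n-1}$, so $\Sigma_X$ is a simplicial sphere; in particular every $(n-2)$-simplex lies in exactly two $(n-1)$-simplices and the complex is pure of dimension $n-1$. I would use the homeomorphism to show that each $C_X(I)$ for $I\in\Sigma_X^{(n)}$ is a \emph{nonsingular} (simplicial) cone: the image $f_X(\sigma_I)$ is an $(n-1)$-cell on $S^{n-1}$, forcing the $\lambda_i$, $i\in I$, to be linearly independent, and injectivity of $f_X$ across adjacent simplices forces them to generate $\Lie\Tn$ over $\Z$ (otherwise a sublattice cone would create an overlap or gap, contradicting bijectivity); this is where I expect to have to argue most carefully. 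Completeness is immediate from surjectivity of $f_X$, and the fact that distinct top cones meet only in common faces is immediate from injectivity, so $(\Sigma_X,C_X)$ is an ordinary complete nonsingular fan. Finally, a nonsingular fan cannot carry nontrivial weights while remaining consistent with a genuine torus manifold — more precisely, when $(\Sigma_X, C_X)$ is an ordinary fan, Theorem~\ref{theo:todd} computes $\Todd(X)=\sum_{I} w_X(I)$ over the single cone containing a generic $v$, so $\Todd(X)=w_X(I_0)\ge 1$, with equality forcing all weights to be $1$; but one must still rule out $w_X(I_0)>1$. Here I would invoke that $f_X$ being a homeomorphism makes $X$ look like a quasitoric/toric manifold over the polytope dual to $\Sigma_X$, on which the fixed point sets $X_I$ are single points, hence $w_X(I)=1$ for all $I$; combining these gives $\Todd(X)=1$ via Corollary~\ref{coro:fan}.

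The main obstacle is the nonsingularity (unimodularity) claim in the ``only if'' direction: passing from the purely topological injectivity and surjectivity of $f_X$ to the \emph{lattice}-theoretic statement that the $\lambda_i$ span $\Lie\Tn$ integrally is not automatic, and likely requires examining how two adjacent top cones fit together along their shared facet — essentially showing that a non-unimodular cone would make $f_X$ fail to be locally injective near that facet, or fail to be open there. I would handle this by a local degree or local-homeomorphism argument on $S^{n-1}$, comparing the sum of local contributions of the simplices around a codimension-one face.
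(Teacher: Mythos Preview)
Your argument overcomplicates both directions because you overlook two facts that the paper has already secured before this lemma.

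First, the section begins with the standing hypothesis ``we assume that $X_J$ is connected for every $J\in\Sigma_X$.'' For $I\in\Sigma_X^{(n)}$ the set $X_I$ has complex dimension $0$, so connectedness forces $\#X_I=1$ and hence $w_X(I)=1$ for every top simplex. Your attempt to deduce $w_X\equiv 1$ from the homeomorphism property of $f_X$ is therefore unnecessary (and your proposed route, via ``$X$ looks like a quasitoric manifold,'' is circular).

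Second, nonsingularity of every top cone $C_X(I)$ is automatic for a torus manifold and has nothing to do with $f_X$. At a fixed point $x\in X_I$ the $\Tn$-action is effective, so the circle subgroups $T_i$ for $i\in I$ generate $\Tn$; equivalently the $\lambda_i$, $i\in I$, form a $\Z$-basis of $\Hom(S^1,\Tn)$. Your ``main obstacle'' is thus a non-issue, and your proposed local-degree argument could never have worked anyway: the map $f_X$ depends only on the $\R$-linear span of the $\lambda_i$, so it cannot distinguish a unimodular basis from a non-unimodular $\R$-basis.

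With these two points in hand the paper's proof is a single stroke. Since $w_X\equiv 1$, Theorem~\ref{theo:todd} says $\Todd(X)$ equals the number of top cones $C_X(I)$ containing a generic $v$. Thus $\Todd(X)=1$ if and only if the (automatically nonsingular) cones $C_X(I)$ are pairwise non-overlapping and cover $\Lie\Tn$, i.e.\ form an ordinary complete fan; and this is visibly equivalent to $f_X$ being a bijection, hence a homeomorphism (it is already closed and continuous). Your ``if'' direction is fine and matches this; your ``only if'' direction should be replaced by the observation that bijectivity of $f_X$ is exactly the statement that each generic ray meets exactly one top cone.
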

	\begin{proof}
We note that $X_I$ is one point for any $I\in \Sigma_X^{(n)}$ because $X_I$ is connected by assumption and of codimension $n$ in $X$.  Therefore, $w_X(I)=1$ for any $I\in \Sigma_X^{(n)}$ and this together with Theorem~\ref{theo:todd} tells us that the Todd genus $\Todd(X)$ coincides with the number of cones $C_X(I)$ containing the vector $v \in \Lie\Tn$ in Theorem~\ref{theo:todd}. 

The above observation implies that the cones $C_X(I)$ for $I\in \Sigma _X$ do not overlap and form an ordinary complete fan in $\Lie\Tn$ if and only if $\Todd(X)=1$ and this is equivalent to the map $f_X$ being a homeomorphism, proving the lemma. 
	\end{proof}
	For each characteristic submanifold $X_i$, we can also define a map $f_{X_i} : |\Sigma _{X_i}| \to S^{n-2}$, where $S^{n-2}$ is the unit sphere in $\Lie\Tn/T_i \cong (\Lie T_i)^\perp$, where $(\Lie T_i)^\perp$ denotes the orthogonal complement of a vector subspace $\Lie T_i$ in $\Lie\Tn$. 
	\begin{lemm}
		If $f_{X_i} : |\Sigma _{X_i}| \to S^{n-2}$ is a homeomorphism, then $f_X|_{\sta (\{i\};\Sigma _X)} : {\sta (\{i\};\Sigma _X)} \to S^{n-1}$ is a  homeomorphism onto its image, where $\sta (\{i\};\Sigma _X)$ denotes the open star of $\{ i\}$ in $\Sigma _X$.
	\end{lemm}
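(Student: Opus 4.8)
The plan is to prove the stronger assertion that $f_X$ is injective on the \emph{closed} star of $\{i\}$. Since that closed star is compact and $S^{n-1}$ is Hausdorff, a continuous injection out of it is automatically a homeomorphism onto its image, and hence so is the restriction of that map to any subspace — in particular to the open star $\sta(\{i\};\Sigma_X)$, which sits inside the closed star.

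First I would set up coordinates adapted to $T_i$. Put $H:=(\Lie T_i)^\perp\subset\Lie\Tn$ and let $q\colon\Lie\Tn\to H$ be the orthogonal projection; then $q(\lambda_i)=0$ (as $\lambda_i\in\Lie T_i$) and $q(\lambda_j)=\overline{\lambda_j}$ for all $j$, so $f_{X_i}$ is assembled from the vectors $q(\lambda_j)$. Let $L$ be the closed star of $\{i\}$, i.e.\ the subcomplex of $\Sigma_X$ generated by the simplices containing $i$. Then $L=\link(\{i\};\Sigma_X)*\{i\}=\Sigma_{X_i}*\{i\}$, so $|L|$ is the cone over $|\Sigma_{X_i}|$ with apex $\e_i$: every $p\in|L|$ has the form $p=t\e_i+(1-t)y$ with $t\in[0,1]$ and $y\in|\Sigma_{X_i}|$, where $t$ (the $\e_i$-coordinate of $p$) and, when $t<1$, the point $y$ are determined by $p$, whereas $t=1$ forces $p=\e_i$. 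Writing $y=\sum_j a_j\e_j$ and $u:=\sum_j a_j\lambda_j$, the definition \eqref{eq:4.1} gives, for $t<1$,
\[
f_X(p)=\frac{t\lambda_i+(1-t)u}{|t\lambda_i+(1-t)u|},\qquad
q\big(f_X(p)\big)=\frac{1-t}{|t\lambda_i+(1-t)u|}\,q(u),
\]
while $f_X(\e_i)=\lambda_i/|\lambda_i|$. Here $q(u)=\sum_j a_j\overline{\lambda_j}\neq0$ because $f_{X_i}(y)$ is defined; consequently $q(f_X(p))=0$ exactly when $t=1$, and when $t<1$ the normalization of $q(f_X(p))$ equals $f_{X_i}(y)$.

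Now I would verify injectivity of $f_X$ on $|L|$. Suppose $f_X(p)=f_X(p')$ with $p=t\e_i+(1-t)y$ and $p'=t'\e_i+(1-t')y'$. Comparing $q$-images, either $t=t'=1$, forcing $p=p'=\e_i$, or $t,t'<1$. In the latter case, normalizing the common nonzero vector $q(f_X(p))=q(f_X(p'))$ yields $f_{X_i}(y)=f_{X_i}(y')$, hence $y=y'$ since $f_{X_i}$ is a homeomorphism, so in particular $u=u'$. Then $t\lambda_i+(1-t)u$ and $t'\lambda_i+(1-t')u$ differ by a positive scalar; since $q(u)\neq0$ forces $u\notin\Lie T_i=\R\lambda_i$, the vectors $\lambda_i$ and $u$ are linearly independent, and comparing coefficients in $t\lambda_i+(1-t)u=\rho\big(t'\lambda_i+(1-t')u\big)$ for some $\rho>0$ gives $t=\rho t'$ and $1-t=\rho(1-t')$, whence $t=t'$ and $p=p'$.

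Thus $f_X|_{|L|}\colon|L|\to S^{n-1}$ is a continuous injection from a compact space into a Hausdorff space, hence a homeomorphism onto its image; restricting it to the subspace $\sta(\{i\};\Sigma_X)\subseteq|L|$ proves the lemma. The only step needing care is recovering the cone parameter $t$ at the end of the injectivity argument, and this is precisely where the hypothesis enters: well-definedness of $f_{X_i}$ says $\sum_j a_j\overline{\lambda_j}\neq0$, equivalently $u\notin\Lie T_i$, which is exactly what makes $\lambda_i$ and $u$ independent. (The part where $f_{X_i}$ being a homeomorphism is used, rather than merely well defined, is the identification $y=y'$.)
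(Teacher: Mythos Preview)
Your proof is correct and follows essentially the same route as the paper: project onto $(\Lie T_i)^\perp$ to recover $f_{X_i}(y)$ (hence $y$, by the hypothesis), and then use the linear independence of $\lambda_i$ and $u=\sum_j a_j\lambda_j$ to recover the cone parameter. The only cosmetic difference is that you work on the closed star and invoke compactness, whereas the paper works on the open star and invokes the fact that $f_X$ is a closed map; both devices serve the same purpose of upgrading injectivity to a homeomorphism onto the image.
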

	\begin{proof}
		It suffices to show the injectivity of $f_X|_{\sta (\{i\};\Sigma _X)}$ because $f_X$ is closed and continuous. 
		Let $p_i : \Lie\Tn \to (\Lie T_i)^\perp$ be the orthogonal projection. Through $p_i$, we identify $\Lie\Tn/T_i$ with $(\Lie T_i)^\perp$. Recall that $\Sigma _{X_i} = \link (\{ i\};\Sigma _X)$. For each vertex $j$ of $\link (\{ i\};\Sigma _X)$, we express $\lambda _j$ as
		\begin{equation*}
			\lambda _j = p_i(\lambda _j)+c_{i,j}\lambda _i, \quad c_{i,j} \in \R.
		\end{equation*}
		By the definitions of $\link (\{ i\};\Sigma _X)$ and $\sta (\{i\} ;\Sigma _X)$, we can express an element $x \in \sta (\{ i\} ;\Sigma _X)$ as
		\begin{equation} \label{eq:4.2}
			x = (1-t)\e _i+ty, \quad\text{with $y \in |\link (\{i\};\Sigma _X)|,\ 0\leq t <1$}.
		\end{equation}
		Suppose $y \in \sigma _J \subset |\link (\{i\};\Sigma _X)|$ and write
		\begin{equation*}
			y = \sum _{j \in J}a_j\e _j, \sum _{j \in J}a_j =1, a_j\geq 0.
		\end{equation*}
		Then, it follows from \eqref{eq:4.1} that  
\begin{equation} \label{eq:4}
		\begin{split}
			f_X(x) &=\frac{(1-t)\lambda _i+t\sum _{j \in J}a_j\lambda _j}{|(1-t)\lambda _i+t\sum _{j \in J}a_j\lambda _j|}\\
			&= \frac{(1-t)\lambda _i+t\sum _{j \in J}a_j(p_i(\lambda_j)+c_{i,j}\lambda _i)}{|(1-t)\lambda _i+t\sum _{j \in J}a_j(p_i(\lambda_j)+c_{i,j}\lambda _i)|}\\
			&= g(t,y)f_{X_i}(y)+h(t,y)\frac{\lambda _i}{|\lambda _i|},
		\end{split}
\end{equation}
		where 
		\begin{equation} \label{eq:4.g}
			g(t,y):= \frac{t|\sum _{j \in J}a_jp_i(\lambda_j)|}{|(1-t)\lambda _i+t\sum _{j \in J}a_j(p_i(\lambda_j)+c_{i,j}\lambda _i)|}
		\end{equation}
		and 
		\begin{equation*}
			h(t,y):= \frac{(1-t+t\sum_{j\in J}a_jc_{i,j})|\lambda _i|}{|(1-t)\lambda _i+t\sum _{j \in J}a_j(p_i(\lambda_j)+c_{i,j}\lambda _i)|}.
		\end{equation*}
		Since $|f_X(x)| = |f_{X_i}(y)|=1$ and $f_{X_i}(y)$ is perpendicular to $\lambda_i/|\lambda_i|$, it follows from \eqref{eq:4} that  
\begin{equation} \label{eq:4.2-1}
g^2(t,y)+h^2(t,y)=1.
\end{equation}
		
Take another point $x'\in\sta(\{i\},\Sigma_X)$ and write 
\[
			x' = (1-t)\e_i+t'y', \quad\text{with $y' \in |\link (\{i\};\Sigma _X)|,\ 0\leq t' <1$}
\]
similarly to \eqref{eq:4.2}.  Since 
\[
f_X(x')=g(t',y')f_{X_i}(y')+h(t',y')\frac{\lambda_i}{|\lambda_i|},
\]
we have $f_X(x)=f_X(x')$ if and only if 
\begin{equation} \label{eq:4.3}
g(t,y)f_{X_i}(y)=g(t',y')f_{X_i}(y')\quad\text{and}\quad h(t,y)=h(t',y').
\end{equation}
Both $g(t,y)$ and $g(t',y')$ are non-negative by \eqref{eq:4.g}, so it follows from  \eqref{eq:4.2-1} and \eqref{eq:4.3} that 
if $f_X(x)=f_X(x')$, then 
\begin{equation} \label{eq:4.4}
g(t,y)=g(t',y')\quad\text{and}\quad f_{X_i}(y)=f_{X_i}(y').
\end{equation}  
The latter identity in \eqref{eq:4.4} above implies $y=y'$ since $f_{X_i}$ is a homeomorphism by assumption.  Therefore it follows from \eqref{eq:4.3} and \eqref{eq:4.4} that   
\[
g(t,y)=g(t',y)\quad\text{and}\quad h(t,y)=h(t',y).
\]
This together with \eqref{eq:4} shows that 
\[
(1-t)\lambda _i+t\sum _{j \in J}a_j\lambda _j=(1-t')\lambda _i+t'\sum _{j \in J}a_j\lambda _j.
\]
Here $\lambda_i$ and $\sum_{j\in J}a_j\lambda_j$ are linearly independent, so we conclude $t=t'$.  It follows that $f_X|_{\sta (\{i\};\Sigma _X)}$ is injective, which implies the lemma. 
	\end{proof}
	We have the following corollary.
	\begin{coro}\label{coro:covering}
		If $f_{X_i} : |\Sigma _{X_i}| \to S^{n-2}$ is a homeomorphism for all $i$, then $f_X : |\Sigma _X| \to S^{n-1}$ is a covering map, and hence if $|\Sigma _X|$ is connected and $n-1 \geq 2$ in addition, then $f_X$ is a homeomorphism.
	\end{coro}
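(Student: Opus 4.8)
The plan is to combine the previous lemma with a covering-space argument. First I would observe that by the previous lemma, the hypothesis that each $f_{X_i}\colon|\Sigma_{X_i}|\to S^{n-2}$ is a homeomorphism guarantees that for every vertex $i$ the restriction $f_X|_{\sta(\{i\};\Sigma_X)}$ is a homeomorphism onto its image. Since the open stars $\sta(\{i\};\Sigma_X)$ cover $|\Sigma_X|$ (every point of $|\Sigma_X|$ lies in the relative interior of some simplex, hence in the open star of each of its vertices), this already shows $f_X$ is a local homeomorphism. The work is to upgrade ``local homeomorphism'' to ``covering map,'' which is where compactness of $|\Sigma_X|$ enters: $|\Sigma_X|$ is compact because $\Sigma_X$ is a finite simplicial complex, and a local homeomorphism from a compact space onto a connected Hausdorff space is automatically a covering map. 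I would spell this out: $f_X$ is closed (stated in the text, right after \eqref{eq:4.1}) and a local homeomorphism, hence open, hence $f_X(|\Sigma_X|)$ is clopen in $S^{n-1}$; since $S^{n-1}$ is connected (here $n-1\ge 1$, and the statement we care about has $n-1\ge 2$), $f_X$ is surjective, and a surjective closed local homeomorphism with finite fibers is a covering map.

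Second, for the final clause I would invoke simple connectivity of the base. If $|\Sigma_X|$ is connected and $n-1\ge 2$, then $S^{n-1}$ is simply connected, so any covering map $f_X\colon|\Sigma_X|\to S^{n-1}$ with connected total space must be a homeomorphism (a connected covering of a simply connected space is trivial, i.e. a single-sheeted covering, which is a homeomorphism).

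The main obstacle is the careful verification that a closed local homeomorphism with the relevant finiteness onto a connected space is a covering map — in particular checking that the fibers have locally constant, hence constant, cardinality, and that each point of $S^{n-1}$ has an evenly covered neighborhood. This is standard point-set topology, but it is the only non-formal step: one takes $q\in S^{n-1}$, writes $f_X^{-1}(q)=\{p_1,\dots,p_k\}$ (finite because it is a closed discrete subset of the compact space $|\Sigma_X|$), picks disjoint open neighborhoods $U_1,\dots,U_k$ of the $p_j$ on which $f_X$ is a homeomorphism, and uses closedness of $f_X$ to produce a neighborhood $V$ of $q$ with $f_X^{-1}(V)\subset\bigcup_j U_j$; shrinking further makes $V$ evenly covered. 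Everything else — surjectivity, openness, the deck-transformation argument for the last clause — is immediate from results already in the excerpt or from elementary covering-space theory.
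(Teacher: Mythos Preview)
Your argument is correct and is precisely the standard reasoning the paper leaves to the reader (the corollary is stated without proof there). One detail worth making explicit: the preceding lemma only asserts that $f_X|_{\sta(\{i\};\Sigma_X)}$ is a homeomorphism onto its image, not that this image is open in $S^{n-1}$; to obtain a genuine local homeomorphism (and hence the openness of $f_X$ you use later), observe that the hypothesis $|\Sigma_{X_i}|\cong S^{n-2}$ makes each open star the open cone on $S^{n-2}$, i.e.\ a copy of $\R^{n-1}$, so invariance of domain applies.
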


	\section{Torus manifolds with vanishing odd degree cohomology}\label{sec:oddvanish}
	In this section, we prove Theorem~\ref{theo:maintheo} in the Introduction. 

The $\Tn$-action on a torus manifold $X$ of dimension $2n$ is said to be \emph{locally standard} if the $\Tn$-action on $X$ locally looks like a faithful representation of $\Tn$, to be more precise, any point of $X$ has an invariant open neighborhood equivariantly diffeomorphic to an invariant open set of a faithful representation space of $\Tn$. The orbit space $X/\Tn$ is a manifold with corners if the $\Tn$-action on $X$ is locally standard. A manifold with corners $Q$ is called \emph{face-acyclic} if every face of $Q$ (even $Q$ itself) is acyclic.  A face-acyclic manifold with corners is called a \emph{homology polytope} if any intersection of facets of $Q$ is connected unless empty.   
The combinatorial structure of $X/\Tn$ and the topology of $X$ are deeply related as is shown in the following theorem. 

	\begin{theo}[\cite{Ma-Pa06}]\label{theo:Ma-Pa2}
	Let $X$ be a torus manifold of dimension $2n$.  
\begin{enumerate}
\item $H^{odd}(X)=0$ if and only if the $\Tn$-action on $X$ is locally standard and $X/\Tn$ is face-acyclic.
\item $H^*(X)$ is generated by its degree-two part as a ring if and only if the $\Tn$-action on $X$ is locally standard and $X/\Tn$ is a homology polytope. 
\end{enumerate}
	\end{theo}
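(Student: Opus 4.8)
The plan is to prove each of the two equivalences by establishing the two implications separately: the \emph{if} directions are a direct computation with the reconstruction of a locally standard torus manifold from its orbit space, while the \emph{only if} directions require an induction on $n$ together with equivariant formality and the localization theorem in equivariant cohomology.

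First the \emph{if} direction of (1). If the $\Tn$-action on $X$ is locally standard, then $Q:=X/\Tn$ is a manifold with corners and $X$ can be reconstructed as $(\Tn\times Q)/{\sim}$, the identification being governed by a characteristic function on the facets of $Q$ (the Davis--Januszkiewicz construction, in the form valid over an arbitrary manifold with corners). One then computes $H^*(X)$ via the Mayer--Vietoris/face-filtration spectral sequence of this model: the contribution of a face $F$ of $Q$ is, up to a degree shift by an even number, $H^*(F)$ tensored with an exterior term, so if every face of $Q$ is acyclic then the odd part of every contribution vanishes and $H^{odd}(X)=0$. For the \emph{if} direction of (2) one runs the same computation under the stronger hypothesis that $Q$ is a homology polytope: connectedness of all facet intersections forces the face ring (Stanley--Reisner ring) of the nerve of the facets to surject onto $H^*_{\Tn}(X)$, and equivariant formality of $X$ (a consequence of $H^{odd}(X)=0$, which holds by (1)) then identifies $H^*(X)$ with the quotient of this face ring by the linear system of parameters determined by the characteristic function; in particular it is generated in degree two. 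Note also that the hypothesis of (2) trivially implies $H^{odd}(X)=0$, since a product of classes of even degree has even degree; so when proving the \emph{only if} direction of (2) we may assume the conclusion of (1).

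The crux is the \emph{only if} direction of (1): assume $H^{odd}(X)=0$ and deduce local standardness and face-acyclicity. We argue by induction on $n$, the case $n=1$ being $X=S^2$. The key input is the localization theorem: $H^{odd}(X)=0$ makes the Serre spectral sequence of $X\to X_{\Tn}\to B\Tn$ collapse, so $X$ is equivariantly formal, and the same reasoning shows $H^{odd}(X^H)=0$ for the fixed-point set of every subtorus $H\subseteq\Tn$. Applying this to the circles $T_i$ shows that each characteristic submanifold $X_i$ is a $2(n-1)$-dimensional torus manifold with $H^{odd}(X_i)=0$, hence by induction the $\Tn/T_i$-action on $X_i$ is locally standard with face-acyclic orbit space; likewise for every connected component of every intersection $X_I$. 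Combining this with the fact that $H^{odd}(X)=0$ forces $X^{\Tn}$ to be finite with a standard tangential representation at each fixed point (where one uses $\chi(X)=\dim H^{ev}(X)=\#X^{\Tn}$, the last equality by localization, cf.\ the Lefschetz-type statement recalled in Remark~\ref{rema:Lefschetz}), one patches the local models to conclude that the $\Tn$-action on $X$ itself is locally standard. Having local standardness, the faces of $Q=X/\Tn$ are precisely the orbit spaces of the connected components of the intersections $X_I$, which are face-acyclic by the inductive step; it then remains to see that $Q$ itself is acyclic, which is a Mayer--Vietoris argument from the face-acyclic boundary together with $H^{odd}(X)=0$. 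For the \emph{only if} direction of (2): a disconnected facet intersection $X_I$ would, by the computation used in the \emph{if} direction, produce a ring generator of $H^*(X)$ in degree greater than two that is not a polynomial in the degree-two classes, contradicting the hypothesis; hence $Q$ is a homology polytope.

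I expect the main obstacle to lie entirely in the \emph{only if} direction of (1), and to consist of two points. The first is upgrading ``each $X_i$ is locally standard'' to ``$X$ is locally standard'': this amounts to ruling out non-standard slice representations, and it genuinely relies on the finiteness and standardness of $X^{\Tn}$ forced by the cohomological hypothesis, not merely on the inductive data. The second is the inductive step asserting that the orbit space of a locally standard torus manifold with vanishing odd cohomology is \emph{acyclic} (and not just that its proper faces are), which is a real cohomological computation rather than formal bookkeeping. The equivariant-formality/localization machinery and Bredon's fixed-point theorem (already invoked in Remark~\ref{rema:Lefschetz}) are the principal tools throughout.
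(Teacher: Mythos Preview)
The paper does not prove this theorem: it is quoted verbatim from \cite{Ma-Pa06} (Masuda--Panov) and used as a black box, so there is no ``paper's own proof'' to compare your proposal against. Your sketch is, in outline, a faithful summary of the strategy actually carried out in \cite{Ma-Pa06}: the \emph{if} directions via the canonical model $(\Tn\times Q)/{\sim}$ and a face-filtration computation, and the \emph{only if} directions via equivariant formality, the localization theorem, and induction through the characteristic submanifolds. You have also correctly identified the two genuinely delicate steps in that paper --- deducing local standardness of $X$ from the inductive information on the $X_i$'s, and establishing acyclicity of $Q$ itself (not just of its proper faces). Since the present paper offers no argument for Theorem~\ref{theo:Ma-Pa2}, there is nothing further to compare; your outline is an accurate roadmap of the original source rather than an alternative route.
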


Suppose that $X$ is a torus manifold of dimension $2n$ with vanishing odd degree cohomology.  Then $X/\Tn=Q$ is a manifold with corners and face-acyclic.  Let $\pi \colon X\to X/\Tn=Q$ be the quotient map and let $Q_1,\dots,Q_m$ be the facets of $Q$.  Then $\pi^{-1}(Q_1),\dots,\pi^{-1}(Q_m)$ are the characteristic submanifolds of $X$, denoted $X_1,\dots,X_m$ before.  If $Q$ is a homology polytope, i.e. any intersection of facets of $Q$ is connected unless empty (this is equivalent to any intersection of characteristic submanifolds of $X$ being connected unless empty), then the geometric realization $|\Sigma_X|$ of the simplicial complex $\Sigma_X$ is a homology sphere of dimension $n-1$ (see \cite[Lemma 8.2]{Ma-Pa06}), in particular, connected when $n\ge 2$.  Unless $Q$ is a homology polytope, intersections of facets are not necessarily connected.  However, we can change $Q$ into a homology polytope by cutting $Q$ along faces of $Q$.  This operation corresponds to blowing-up along connected components of intersections of characteristic submanifolds of $X$ equivariantly.  We refer the reader to \cite{Ma-Pa06} for the details.      

The results in Section~\ref{sec:dim2} required the simply connectedness of a complex surface.  Here is a criterion of the simply connectedness of a torus manifold in terms of its orbit space.   

\begin{lemm} \label{lemm:5.1}
Suppose that the $\Tn$-action on a torus manifold $X$ is locally standard.  Then 
$X$ is simply connected if and only if the orbit space $X/\Tn$ is simply connected.
\end{lemm}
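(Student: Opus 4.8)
The plan is to use the fact that for a locally standard action the quotient map $\pi\colon X\to Q:=X/T^n$ is, away from the singular strata, a principal $T^n$-bundle, and that $T^n$ is connected. First I would recall the general principle that if $\pi\colon X\to Q$ is a quotient map by a compact connected group action then $\pi$ induces a surjection $\pi_1(X)\to\pi_1(Q)$ (the fibres are connected, being orbits, so $Q$ is the quotient of $X$ by a connected equivalence relation); this already gives the "if'' direction in one line, since then $\pi_1(X)=1$ forces $\pi_1(Q)=1$. Wait—that is the wrong direction; let me restate. The easy direction is: if $X$ is simply connected, then since $\pi$ is surjective with path-connected fibres, $\pi_*\colon\pi_1(X)\to\pi_1(Q)$ is surjective, so $\pi_1(Q)=1$. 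For the converse, which is the substance of the lemma, I would assume $Q$ is simply connected and show $\pi_1(X)=1$.

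For the converse I would build $X$ up over a filtration of $Q$ by the depth of its corner strata and apply van Kampen inductively. Over the interior $\mathring Q$ (the set of free orbits) $\pi$ restricts to a principal $T^n$-bundle $X^\circ\to\mathring Q$; from the long exact homotopy sequence of this bundle, $\pi_1(X^\circ)$ is generated by the image of $\pi_1(T^n)\cong\Z^n$, because $\pi_1(\mathring Q)$ surjects onto... actually $\pi_1(\mathring Q)$ need not be trivial, but $\mathring Q$ is the interior of a simply connected manifold with corners and is an open dense subset whose inclusion into $Q$ is $\pi_1$-surjective, and in fact $\pi_1(\mathring Q)\to\pi_1(Q)=1$ shows $\pi_1(\mathring Q)$ is generated by loops that bound in $Q$, i.e. loops that can be pushed into facet strata. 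The key local computation is: near a codimension-$k$ face, $X$ looks like $\mathbb C^k\times(\text{free part})$, and a small loop in $\mathring Q$ linking that face maps under $\pi_*$ to a loop in $X^\circ$ which, viewed in the local model $\mathbb C^k\times T^{n-k}\times(\text{disk})$, bounds a disc (it is the boundary of a $2$-disc in one $\mathbb C$-factor, the orbit direction collapsing). Thus each generator coming from $\pi_1(T^n)$ that is "killed at a facet'' dies in $\pi_1(X)$; pushing loops of $\mathring Q$ to facets and facets-of-facets, and using that $Q$ is face-acyclic / simply connected so that eventually all loops are accounted for, one concludes $\pi_1(X)=1$.

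Concretely I would organise the induction on the dimension $n$, or equivalently stratify $Q=Q^{(0)}\supset\partial Q\supset\cdots$: set $X_{(j)}=\pi^{-1}$ of the union of closed faces of codimension $\ge j$ and run van Kampen on $X=X_{(0)}=X^\circ\cup(\text{tubular neighbourhood of }X_{(1)})$. The tubular neighbourhood of a characteristic submanifold $X_i$ deformation retracts to $X_i$, which is itself a torus manifold (of dimension $2(n-1)$) with locally standard action over the facet $Q_i$; by the induction hypothesis applied to the simply connected manifold with corners $Q_i$ (facets of a simply connected manifold with corners are themselves simply connected — this is where one uses acyclicity or a Mayer–Vietoris/van Kampen argument on $Q$ itself), $X_i$ is simply connected. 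Van Kampen then gives $\pi_1(X)$ as a quotient of $\pi_1(X^\circ)$ by the normal subgroup generated by $\pi_1$ of the overlaps, and the overlap computation above shows exactly the $\Z^n$-worth of generators gets killed, leaving $\pi_1(X)=1$.

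The main obstacle I anticipate is the bookkeeping in the van Kampen step: making precise that the loops generating $\pi_1(X^\circ)$ coming from the torus fibre are each nullhomotopic after attaching the neighbourhoods of the characteristic submanifolds, and that no extra relations or generators survive — in other words, checking that the local meridian loops around facets together generate all of $\pi_1(T^n)$ inside $\pi_1(X^\circ)$, which uses that the characteristic vectors $\lambda_1,\dots,\lambda_m$ span $\Lie T^n$ (equivalently that the action has a fixed point, so some $n$ of the facets meet at a vertex). Handling non-normal orbit types and the corner structure cleanly may be most efficiently done by citing the known description of $X$ as glued from $Q_i$-pieces, rather than re-deriving it.
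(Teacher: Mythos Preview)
Your ``only if'' direction is fine and matches the paper (connected fibres give a $\pi_1$-surjection $X\to Q$).

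For the ``if'' direction, however, there is a genuine error and a genuine gap, and both disappear once you notice a fact you explicitly deny: the interior $\mathring Q$ \emph{is} simply connected. A compact manifold with corners admits a collar of its boundary, so the inclusion $\mathring Q\hookrightarrow Q$ is a homotopy equivalence; hence $\pi_1(\mathring Q)=\pi_1(Q)=1$. Your statement ``$\pi_1(\mathring Q)$ need not be trivial'' sends you off into an unnecessary stratified van~Kampen argument. That argument then acquires a real gap: your induction step requires each facet $Q_i$ to be simply connected, and you note parenthetically that ``this is where one uses acyclicity.'' But the lemma does \emph{not} assume face-acyclicity, only that $Q$ is simply connected, and facets of a simply connected manifold with corners need not be simply connected in general. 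So the inductive scheme, as stated, does not go through under the hypotheses of the lemma.

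The paper's argument avoids all of this and is essentially the observation you bury at the very end. Since each $X_i$ has real codimension two, the inclusion $X\setminus\bigcup_i X_i\hookrightarrow X$ is $\pi_1$-surjective. Over $\mathring Q$ the action is free, so $X\setminus\bigcup_i X_i\to\mathring Q$ is a principal $T^n$-bundle; as $\pi_1(\mathring Q)=1$, the inclusion of a single free orbit induces an isomorphism $\pi_1(T^n)\cong\pi_1(X\setminus\bigcup_i X_i)$. Finally, because $X$ has a fixed point, that free orbit contracts in $X$ to a point, so the surjection $\pi_1(X\setminus\bigcup_i X_i)\to\pi_1(X)$ is the zero map and $\pi_1(X)=1$. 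No induction, no van~Kampen, no hypothesis on the facets. You had every ingredient (the free-part bundle, the codimension-two surjectivity, the fixed point killing the torus generators), but the mistaken belief about $\pi_1(\mathring Q)$ kept you from assembling them directly.
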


\begin{proof}
Since the group $\Tn$ is connected, the \lq\lq only if" part in the lemma follows from \cite[Corollary 6.3 in p.91]{Bre72}.    

We shall prove the \lq\lq if" part.  Suppose that $X/\Tn$ is simply connected.  Since each characteristic submanifold $X_i$ of $X$ is of real codimension two, the homomorphism 
\begin{equation} \label{eq:5.1}
\pi_1(X\backslash \cup_i X_i)\to \pi_1(X)
\end{equation}
induced by the inclusion map from $X\backslash \cup_i X_i$ to $X$ is surjective.  Here, the $\Tn$-action on $X\backslash \cup_i X_i$ is free since the $\Tn$-action on $X$ is locally standard, so that the quotient map from $X\backslash\cup_i X_i$ to $(X\backslash\cup_i X_i)/\Tn$ gives a fiber bundle with fiber $\Tn$.  The orbit space $(X\backslash \cup_i X_i)/\Tn$ is simply connected because $X/\Tn$ ia a manifold with corners, $(X\backslash \cup_i X_i)/\Tn$ is the interior of $X/\Tn$ and $X/\Tn$ is simply connected by assumption.  Therefore the inclusion map from a free $\Tn$-orbit to $X\backslash \cup_i X_i$ induces an isomorphism on their fundamental groups.  But any free $\Tn$-orbit shrinks to a fixed point in $X$, so the epimorphism in \eqref{eq:5.1} must be trivial and hence $X$ is simply connected.   
\end{proof}
	
	Now, we are in a position to prove the following main theorem stated in the Introduction.
	\begin{theo}\label{theo:todd1}
		If a complex torus manifold $X$ has vanishing odd degree cohomology, then the Todd genus of $X$ is $1$.
	\end{theo}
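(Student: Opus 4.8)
The plan is to reduce the $2n$-dimensional case to the surface case (Proposition~\ref{prop:todd}) by induction on $n$, using the multi-fan machinery of Sections~\ref{sec:multi-fan} and~\ref{sec:map} together with the combinatorial analysis of $Q=X/\Tn$. First I would dispose of a normalization: by the discussion following Theorem~\ref{theo:Ma-Pa2}, if $Q$ is not a homology polytope one can equivariantly blow up $X$ along connected components of intersections of characteristic submanifolds to obtain a new complex torus manifold $X'$ with vanishing odd cohomology whose orbit space is a homology polytope; since equivariant blow-up along such loci does not change the Todd genus (the exceptional divisor contributes a projective-bundle summand whose Todd genus matches), it suffices to prove $\Todd(X)=1$ under the extra hypothesis that every $X_J$ ($J\in\Sigma_X$) is connected. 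Under that hypothesis the apparatus of Section~\ref{sec:map} applies directly: each $X_J$ is itself a complex torus manifold of complex dimension $n-\#J$ with connected face intersections, so the inductive hypothesis is available for all proper faces.

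The base case $n=1$ is trivial ($X=\C P^1$), and $n=2$ is exactly Proposition~\ref{prop:todd} once one checks simple connectedness, which follows from Lemma~\ref{lemm:5.1} because $Q$ is face-acyclic (hence acyclic and, being a manifold with corners, simply connected). For the inductive step with $n\ge 3$: for each characteristic submanifold $X_i$, the quotient $X_i/(\Tn/T_i)$ is again face-acyclic (it is a facet of the face-acyclic $Q$), and by the normalization its face intersections are connected, so $X_i$ is a complex torus manifold with vanishing odd cohomology in complex dimension $n-1$; by the inductive hypothesis $\Todd(X_i)=1$, hence by Lemma~\ref{lemm:homeo} the map $f_{X_i}\colon|\Sigma_{X_i}|\to S^{n-2}$ is a homeomorphism. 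Now Corollary~\ref{coro:covering} tells us $f_X\colon|\Sigma_X|\to S^{n-1}$ is a covering map, and since $n-1\ge 2$ and $|\Sigma_X|$ is connected (it is a homology $(n-1)$-sphere because $Q$ is now a homology polytope), $f_X$ is a homeomorphism; Lemma~\ref{lemm:homeo} then gives $\Todd(X)=1$.

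The step I expect to be the main obstacle is the reduction to connected face intersections — specifically, verifying carefully that the equivariant blow-up leaves the Todd genus and the ``vanishing odd cohomology'' property intact, and that after finitely many such blow-ups one genuinely reaches a homology polytope (this is the content of the results of \cite{Ma-Pa06} one must invoke and adapt to the holomorphic category). A secondary subtlety is that the induction needs the base case at $n=2$, not $n=1$: the covering-space argument via Corollary~\ref{coro:covering} only kicks in when $n-1\ge 2$, so one cannot bootstrap purely from $\C P^1$; the genuinely analytic input (Donaldson's theorem, via Proposition~\ref{prop:todd}) is unavoidable and must be fed in at dimension four. Everything else is a routine unwinding of the multi-fan definitions and the fact that faces of face-acyclic manifolds with corners are again face-acyclic.
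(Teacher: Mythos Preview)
Your proposal is correct and follows essentially the same route as the paper's own proof: normalize via equivariant blow-up to a homology polytope orbit space (using birational invariance of the Todd genus), handle $n=1$ trivially and $n=2$ via Proposition~\ref{prop:todd} after invoking Lemma~\ref{lemm:5.1}, then for $n\ge 3$ apply the inductive hypothesis to each $X_i$, use Lemma~\ref{lemm:homeo} and Corollary~\ref{coro:covering} to conclude that $f_X$ is a homeomorphism, and read off $\Todd(X)=1$. The only cosmetic difference is that the paper justifies simple connectedness of $Q$ at $n=2$ by noting that an acyclic $2$-dimensional manifold with corners is contractible, whereas your parenthetical ``acyclic and, being a manifold with corners, simply connected'' leans implicitly on the dimension; this is harmless in context.
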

	\begin{proof}
	Let $n$ be the complex dimension of $X$ as usual.  
Since $H^{\text{odd}}(X)=0$, the orbit space $X/\Tn$ is face-acyclic by Theorem~\ref{theo:Ma-Pa2}.  As remarked after Theorem~\ref{theo:Ma-Pa2}, one can change $X$ into a complex torus manifold whose orbit space is a homology polytope by blowing-up $X$ equivariantly.  Since Todd genus is a birational invariant, it remains unchanged under blowing-up. Therefore we may assume that the orbit space of our $X$ is a homology polytope, so that any intersection of characteristic submanifolds of $X$ is connected unless empty and $|\Sigma_X|$ is a homology sphere of dimension $n-1$.  
Since the orbit space of $X_i$ is a facet of $X/\Tn$, it is also a homology polytope so that any intersection of characteristic submanifolds of $X_i$ (viewed as a complex torus manifold) is also connected unless empty and $|\Sigma_{X_i}|$ is a homology sphere of dimension $n-2$.  Therefore, the results in Section~\ref{sec:map} are applicable to $X$ and $X_i$'s. 

We shall prove the theorem by induction on the complex dimension $n$ of $X$. If $n=1$, then $X$ is $\C P^1$ and hence $\Todd(X)=1$. When $n=2$, the orbit space $X/\T2$ is contractible because $X/\T2$ is acyclic by Theorem \ref{theo:Ma-Pa2} and the dimension of $X/\T2$ is $2$.  Therefore, $X$ is simply connected by Lemma~\ref{lemm:5.1} and $\Todd(X)=1$ by Proposition~\ref{prop:todd}.  

Assume that $n \geq 3$ and the theorem holds when the complex dimension is equal to $n-1$. Then, $\Todd(X_i)=1$ for any $X_i$ by induction assumption and hence $f_{X_i} : |\Sigma _{X_i}| \to S^{n-2}$ is a homeomorphism by Lemma~\ref{lemm:homeo}. Since $|\Sigma _X|$ is a homology sphere of dimension $n-1(\ge 2)$,  $|\Sigma_X|$ is connected and hence $f_X : |\Sigma _X | \to S^{n-1}$ is a homeomorphism by Corollary \ref{coro:covering}. It follows from Lemma \ref{lemm:homeo} that $\Todd(X)=1$. This completes the induction step and the theorem is proved.
	\end{proof}	

	\section{Proof of Theorem~\ref{theo:quasitoric}}\label{sec:quasitoric}
A quasitoric manifold $X$ of dimension $2n$ is a smooth closed manifold endowed with a locally standard $\Tn$-action, whose orbit space is a simple polytope $Q$ of dimension $n$. Clearly, $X$ is a torus manifold.  The characteristic submanifolds $X_1,\dots,X_m$ of $X$ bijectively correspond to the facets $Q_1,\dots, Q_m$ of $Q$ through the quotient map $\pi \colon X \to Q$.  
Therefore, for $I\subset \{1,\dots,m\}$, $X_I=\cap_{i\in I}X_i$ is non-empty if and only if $Q_I:=\cap_{i\in I}Q_i$ is non-empty; so  
the simplicial complex 
\[
\Sigma_X=\{ I\subset \{1,\dots,m\}\mid X_I\neq\emptyset\}
\]
introduced in Section~\ref{sec:multi-fan} is isomorphic to the boundary complex of the simplicial polytope dual to $Q$.   
As before, let $T_i$ be the circle subgroup of $\Tn$ which fixes $X_i$ pointwise and let $\lambda_i\colon S^1\to T_i\subset \Tn$ be an isomorphism.  There are two choices of $\lambda_i$ for each $i$.  

One can recover $X$ from the data $(Q,\{\lambda_i\}_{i=1}^m)$ up to equivariant homeomorphism as follows. 
Any codimension $k$ face $F$ of $Q$ is written as $Q_I$ for a unique $I\in \Sigma_X$ with cardinality $k$ and we denote the subgroup $T_I$ by $T_F$.  
For a point $p \in Q$, we denote by $F(p)$ the face containing $p$ in its relative interior. Set 
	\begin{equation*}
		X(Q,\{\lambda_i\}_{i=1}^m ):= \Tn\times Q/\sim,
	\end{equation*}
	where $(t,p)\sim (s,q)$ if and only if $p=q$ and $ts^{-1} \in T_{F(p)}$. Then $X$ and $X(Q,\{\lambda_i\}_{i=1}^m)$ are known to be equivariantly homeomorphic (\cite{Da-Ja91}).

	Suppose that our quasitoric manifold $X$ admits an invariant complex structure.  Then, the isomorphism $\lambda _i$ is unambiguously determined by requiring the identity \eqref{eq:3.1}, that is  
	\begin{equation*}
		\lambda _i(g)_*(\xi )=g\xi, \quad ^\forall g \in S^1 \text{ and } ^\forall \xi \in TX|_{X_i}/TX_i.
	\end{equation*}  
The simplicial complex $\Sigma_X$ and the elements $\lambda_i$'s are used to define the multi-fan of $X$.  But since the Todd genus of $X$ is one by Theorem~\ref{theo:todd1}, the multi-fan of $X$ is an ordinary complete non-singular fan by Corollary~\ref{coro:fan} and hence it is the fan of a toric manifold.  
Finally, we note that since $\Sigma_X$ is the boundary complex of the simplicial polytope dual to the simple polytope $Q$, it determines $Q$ as a manifold with corners up to homeomorphism.  This implies Theorem \ref{theo:quasitoric} because the equivariant homeomorphism type of $X$ is determined by $Q$ and the elements $\lambda_i$'s as remarked above. 

\bigskip
{\bf Acknowledgment.}  The authors thank Masaaki Ue and Yoshinori Namikawa for their helpful comments on the automorphism groups of compact complex surfaces.

\end{document}